\begin{document}

\title{On the $\sn$-Problem}
\date{\today}
\author{S\"oren Christensen\thanks{Mathematisches Seminar, Christian-Albrechts-Universit\"at zu Kiel, Ludewig-Meyn-Str.  4, D-24098 Kiel, Germany, E-mail:  christensen@math.uni-kiel.de}, Simon Fischer\thanks{Mathematisches Seminar, Christian-Albrechts-Universit\"at zu Kiel, Ludewig-Meyn-Str.  4, D-24098 Kiel, Germany, E-mail:  fischer@math.uni-kiel.de}}

\maketitle
\begin{abstract}
    The Chow-Robbins game is a classical still partly unsolved stopping problem introduced by Chow and Robbins in 1965.
    You repeatedly toss a fair coin. After each toss, you decide if you take the fraction of heads up to now as a payoff, otherwise you continue.
    As a more general stopping problem this reads  
        \[V(n,x) = \sup_{\tau  }\e \left [ \frac{x + S_\tau}{n+\tau}\right]\]
    where $S$ is a random walk.
    We give a tight upper bound for $V$ when $S$ has subgaussian increments by using the analogous time continuous problem with a standard Brownian motion as the driving process. 
    For the Chow-Robbins game we as well give a tight lower bound and use these to calculate, on the integers, the complete continuation and the stopping set of the problem for $n\leq 489.241$. 
\end{abstract}

\noindent \textbf{Keywords: }$\sn$-problem, Brownian motion, Chow-Robbins game, optimal stopping, upper bound, lower bound.
\clearpage

\section{Introduction}
We repeatedly toss a fair coin. After each toss we can take the proportion of \textit{heads} up to now as our reward, or continue. This is known as the Chow-Robbins game.
It was first presented by Yuan-Shih Chow and Herbert Robbins in 1965 \cite{chow1965}. As a stopping problem this formulates as 
\begin{equation}\label{eq:vsdef}
    V^{S}(t,x) = \sup_{\tau  }\e \left [ \frac{x + S_\tau}{t+\tau}\right],
\end{equation}
where $S$ is a random walk. In the classical version $S$ has symmetric Bernoulli increments but it is possible to take different random walks as well. Chow and Robbins showed that an optimal stopping time exists in the Bernoulli case, later Dvoretzky \cite{dvoretzky1967} proved this for general centered iid.\ increments with finite variance. But it was (and to some extent still is) difficult to see how that solution looks like.
Asymptotic results were given by Shepp in 1969 \cite{shepp1969}, who showed that the boundary of the continuation set $\partial C$  can be written as a function $b:\RR^{+}\to \RR$ with
    \[\lim_{t\to\infty}\frac{b(t)}{\alpha \sqrt{t}} = 1.\]
Here $\alpha\sqrt{t}$ is the boundary of the analogous stopping problem for a standard Brownian motion $W$ (see Lemma \ref{lem:wt})
\begin{equation}\label{eq:vwdef}
    V^W(t,x)=\sup_{\tau  }\e \left [ \frac{x+W_\tau}{t+\tau}\right ].
\end{equation}

In 2007 Lai, Yao and AitSahlia \cite{lai2007} gave a second order approximation for the limit of $b$, that is
    \[\lim_{t\to\infty}\big (\alpha \sqrt{t}-b(t) \big)= \1.\]
Lai and Yao \cite{lai2006} also calculated some approximation for values of $b$ by using the value function \eqref{eq:vwdef}, without constructing it as an upper bound. They as well gave some calculations for a random walk with standard normal increments.\\
A more rigorous computer analysis was given by H\"aggstr\"om and W\"astlund in 2013 \cite{haeggstoem2013}.
Using backward induction from a time horizon $T=10^{7}$, they calculated lower and upper bounds for $V^{S}$. For points, reachable from $(0,0)$, they calculated if they belong to the stopping or to the continuation set and were able to do so for all but 7 points $(n,x)$ with $n\leq 1000$.\\

In this paper we will give much sharper upper and lower bounds for $V^{S}$. Using backward induction with these bounds, we are able to calculate all stopping and continuation points $(n,x)\in \NN\times \ZZ$ with $n\leq 10^{5}$. We show that all 7 points $(n,x)$ with $n\leq 1000$ that were left open in \cite{haeggstoem2013}, belong to the stopping set.\\
In Section \ref{sec:up} we construct an upper bound for the value function \eqref{eq:vsdef}
for random walks with subgaussian increments. The main observation is that the value function 
\eqref{eq:vwdef}
is an upper bound for $V^{S}$. 
This is carried out in Subsection \ref{ssec:sn} on the Chow-Robbins game.
In Subsection \ref{ssec:gen} we discuss how this kind of upper bound can be constructed for more general gain functions $g$ and 
   \[V^{S}(t,x) = \sup_{\tau  }\e \left [ g(t+\tau,x + S_\tau)\right].\]
In Section \ref{sec:low} we construct a lower bound for $V^{S}(T,x)$ in the Bernoulli case for a given time horizon $T$. We show that there exists $0<c<1$ and $K>0$ such that 
    \[K \int_{0}^{\infty}e^{ax-\frac{c}{2}a^2T} \dif a\leq V^{S}(T,x)\]
for all $x\leq b(T)$. We then show that the relative error of the bounds is of order $\mathcal{O}\left(\frac{1}{T}\right)$ for $x\geq0$.
In Section \ref{sec:comp} we give computational results for the Chow-Robbins game in the Bernoulli case and give a detailed description of our methods. We calculate all integer valued points in the stopping and in the continuation set for $n\leq 10^{5}$ and give some examples how $V^{S}(t,x)$ and $b(t)$ look for continuous $t$ close to zero.

\paragraph{Notation}
We want to introduce some notation, that we are going to use. $V$ denotes value functions, $D = \{V=g\} = \{(t,x)\mid V(t,x) = g(t,x)\}$ the corresponding stopping set and $C = \{V>g\}$ the continuation set. With $(t,x)$ we denote real variables, with $n$ positive integers.
With a superscript we denote which driving process is used, e.g.\
$C^{S} = \{V^{S} = g\}$, $C^{W} = \{V^{W} = g\}$, etc.
$V_{u}$ and $V_{l}$ denote upper and lower bounds resp.

\section{An upper bound for the value function $V^{S}$}\label{sec:up}
We construct an upper bound for the value function $V^{S}$ of the $\sn$-problem, where $S$ can be any random walk with subgaussian increments. The classical Chow-Robbins game is a special case of these stopping problems.

\begin{definition}[subgaussian random variable]
    Let $\sigma^2>0$. A real, centered random variable $\xi$
    is called 
    \textbf{$\sigma^2$-subgaussian} (or subgaussian with parameter $\sigma^2$), if
    \[ \e[e^{a\xi}] \leq e^{\frac{\sigma^2 a^2}{2}} ~~\text{for all } a\in \RR.\]
\end{definition}
\noindent Some examples of subgaussian random variables are:
\begin{itemize}
    \item  $X$ with $P(X_i =-1) =P(X_i =1) = \frac{1}{2} $ is 1-subgaussian,
    \item The normal distribution $\mathcal{N}(0,\sigma^2)$ is $\sigma^2$-subgaussian,
    \item The uniform distribution on $[-a,a]$ is $a^2$-subgaussian,
    \item Any random variable $Y$ with values in a compact interval $[a,b]$, is $\frac{(b-a)^2}{4}$-subgaussian.
\end{itemize}

 In the following we show that the value function $V^{W}$ of the continuous time problem \eqref{eq:vwdef} is an upper bound for the value function $V^{S}$, whenever $S$ is a random walk with 1-subgaussian increments. We first state the solution of \eqref{eq:vwdef}.
 
\paragraph{The solution of the continuous time problem}
Let
\begin{equation}\label{eq:vw3}
    h(t,x) := (1-\alpha^2)\int_{0}^{\infty}e^{ax-\frac{a^2}{2}t} \dif a 
    =(1-\alpha^2)\frac{1}{t}\frac{\Phi_{t}(x)}{\varphi_t(x)},
\end{equation}
where $\Phi_t(x) = \Phi(x/\sqrt{t})$ denotes the cummulative distribution function of a centered normal distribution with variance $\sigma^2 = t$, $\varphi_t$ the corresponding density function and $\alpha \approx 0.839923675692373$ is the unique solution to
\begin{equation*}
    \alpha \varphi(\alpha) = (1-\alpha^2)\Phi(\alpha).
\end{equation*}
  
\begin{lemma}\label{lem:wt}
    The stopping problem \eqref{eq:vwdef} is solved by 
        \[\tau_{\ast} = \inf\{s\mid x+W_s\geq \alpha\sqrt{s+t}\}\]
    with value function
        \[V^W(t,x) = \begin{cases}
                    h(t,x) & \text{if } x \leq \alpha               \sqrt{t},\\
                    \frac{x}{t} & \text{else}.
                    \end{cases}\]
    $V^{W}(t,x)$ is differentiable (smooth fit), and $h(t,x)\geq g(t,x) = \frac{x}{t}$ for all $t>0$ and $x\in \RR$.             
\end{lemma}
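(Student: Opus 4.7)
The plan is a verification argument for a free-boundary problem, guided by the self-similarity of \eqref{eq:vwdef}. Brownian scaling combined with the time change $\tau\mapsto\lambda^{2}\tau$ forces $V^{W}(\lambda^{2}t,\lambda x)=\lambda^{-1}V^{W}(t,x)$, so $V^{W}$ must be of the form $t^{-1/2}F(x/\sqrt{t})$ and its free boundary is necessarily a half-line $\{x=c\sqrt{t}\}$; the only unknowns are the constant $c$ and the shape of $V^{W}$ in the continuation region.

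To build the candidate, I would use that on the continuation region $V^{W}(t+s,x+W_{s})$ should be a local martingale, so $V^{W}$ solves $\partial_{t}V^{W}+\tfrac{1}{2}\partial_{xx}V^{W}=0$. Each exponential $e^{ax-a^{2}t/2}$ solves this equation, which motivates the ansatz $h(t,x)=K\int_{0}^{\infty}e^{ax-a^{2}t/2}\dif a$; the substitution $b=a\sqrt{t}$ shows $h(t,c\sqrt{t})=t^{-1/2}K\int_{0}^{\infty}e^{bc-b^{2}/2}\dif b$, which has precisely the self-similar scaling required. Imposing value matching $h(t,\alpha\sqrt{t})=\alpha/\sqrt{t}$ and smooth fit $\partial_{x}h(t,\alpha\sqrt{t})=1/t$ at the boundary yields two equations whose unique solution is $K=1-\alpha^{2}$ together with the transcendental equation for $\alpha$ given in the lemma. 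Evaluating the remaining Gaussian integral produces the closed form in \eqref{eq:vw3}.

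The verification itself has three ingredients. (i) $h(t,x)\geq x/t$ globally: with $z=x/\sqrt{t}$ this reduces to $\psi(z)\geq 0$ for all $z\in\RR$, where $\psi$ satisfies the linear ODE $\psi'(z)-z\psi(z)=z^{2}-\alpha^{2}$ with terminal data $\psi(\alpha)=\psi'(\alpha)=0$ coming from value matching and smooth fit. Multiplying by the integrating factor $e^{-z^{2}/2}$ produces the explicit representation $\psi(z)=e^{z^{2}/2}\int_{\alpha}^{z}(u^{2}-\alpha^{2})e^{-u^{2}/2}\dif u$, and its sign can be read off by splitting into the three regions $z\geq\alpha$, $-\alpha\leq z\leq\alpha$ and $z<-\alpha$. (ii) Thanks to smooth fit $V^{W}$ is $C^{1}$ in $x$ and piecewise $C^{2}$, so the usual It\^o formula applies across the free boundary with no local-time correction surviving. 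Since $\partial_{t}V^{W}+\tfrac{1}{2}\partial_{xx}V^{W}=0$ on $\{x<\alpha\sqrt{t}\}$ and $\partial_{t}(x/t)+\tfrac{1}{2}\partial_{xx}(x/t)=-x/t^{2}\leq 0$ on $\{x>\alpha\sqrt{t}\}$ (where $x>0$), the process $V^{W}(t+s,x+W_{s})$ is a supermartingale; combined with $V^{W}\geq g$ optional sampling gives $V^{W}(t,x)\geq\sup_{\tau}\e[g(t+\tau,x+W_{\tau})]$. (iii) For the candidate $\tau_{\ast}$ the stopped process $V^{W}(t+s\wedge\tau_{\ast},x+W_{s\wedge\tau_{\ast}})$ is a true martingale and the stopped payoff equals $\alpha/\sqrt{t+\tau_{\ast}}\leq\alpha/\sqrt{t}$; dominated convergence then yields the matching lower bound $V^{W}(t,x)=\e[g(t+\tau_{\ast},x+W_{\tau_{\ast}})]$.

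The hardest step is the global inequality $h\geq g$: although it is built into the construction through the equation for $\alpha$, reading it off from the integral representation of $\psi$ needs careful sign tracking of $(u^{2}-\alpha^{2})e^{-u^{2}/2}$, with the case $z<-\alpha$ being the most delicate since the integrand crosses zero at $u=-\alpha$. Finiteness of $\tau_{\ast}$, needed for the equality in (iii), follows from the law of the iterated logarithm thanks to $\alpha<1<\sqrt{2}$, and the integrability issues for optional sampling are benign because the stopped payoff is uniformly bounded by $\alpha/\sqrt{t}$.
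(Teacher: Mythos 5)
The paper itself offers no proof of Lemma \ref{lem:wt}: it only cites Shepp and Walker, so there is nothing internal to compare against. Your proposal is the standard self-contained verification argument (essentially Shepp's original route, also in Peskir--Shiryaev), and it is correct in outline: the scaling reduction to $t^{-1/2}F(x/\sqrt t)$, the heat-equation ansatz $h(t,x)=K\int_0^\infty e^{ax-a^2t/2}\,\mathrm{d}a$, the computation showing that value matching plus smooth fit force $K=1-\alpha^2$ and the stated transcendental equation for $\alpha$, the ODE $\psi'-z\psi=z^2-\alpha^2$ with $\psi(\alpha)=\psi'(\alpha)=0$, and the supermartingale/martingale verification all check out. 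Two places deserve an extra line in a full write-up. First, for $z<-\alpha$ the sign of $J(z)=\int_\alpha^z(u^2-\alpha^2)e^{-u^2/2}\,\mathrm{d}u$ cannot be ``read off'' from the integrand alone: you need that $J'(z)=(z^2-\alpha^2)e^{-z^2/2}>0$ on $(-\infty,-\alpha)$ together with $\lim_{z\to-\infty}J(z)=0$, and this last identity is precisely the transcendental equation defining $\alpha$, so the dependence should be made explicit. Second, in step (ii) the bound $\alpha/\sqrt t$ controls the payoff only at $\tau_\ast$, not at an arbitrary stopping time, so passing from a local to a true supermartingale needs either the nonnegativity of $V^W$ plus Fatou or an estimate such as $\e\bigl[\sup_s |x+W_s|/(t+s)\bigr]<\infty$. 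Neither point is a gap in the idea, only in the level of detail.
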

This result has first been proven independently by Shepp \cite{shepp1969} and Walker \cite{walker1969}.\\

\paragraph{Proof that $V^{W}$ is an upper bound for $V^{S}$} 
We know from general theory that $V^{S}$ is the smallest superharmonic function dominating the gain function $g$, see e.g.\ \cite{peskir06}.
If we find a superharmonic function dominating $g$ we have an upper bound for $V^{S}$.

\begin{lemma}\label{lem:mart}
Let $X_i$ be iid.\ 1-subgaussian random variables, $S_n = \sum_{i=1}^{n}X_i$.
The function
    \[h:\RR^{+}\times \RR \to \RR^{+}, (t,x)\mapsto (1-\alpha^{2})\int_{0}^{\infty} e^{ax - \frac{1}{2}a^2t}\dif a\]
is $S$-superharmonic.

\end{lemma}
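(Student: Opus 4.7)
The plan is to verify the defining inequality of $S$-superharmonicity directly, namely
\[\e[h(t+1, x+X_1)] \leq h(t,x)\]
for all $(t,x)\in\RR^+\times\RR$, by interchanging expectation and integral and then invoking the subgaussian moment bound.

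First I would write
\[\e[h(t+1,x+X_1)] = (1-\alpha^2)\,\e\!\left[\int_0^\infty e^{a(x+X_1) - \frac{1}{2}a^2(t+1)}\,\dif a\right].\]
Since the integrand is nonnegative, Tonelli's theorem justifies swapping $\e$ and $\int$, giving
\[\e[h(t+1,x+X_1)] = (1-\alpha^2)\int_0^\infty e^{ax-\frac{1}{2}a^2(t+1)}\,\e[e^{aX_1}]\,\dif a.\]
Now the $1$-subgaussian assumption yields $\e[e^{aX_1}] \leq e^{a^2/2}$ for every $a\in\RR$, and in particular for $a\geq 0$. Substituting and simplifying the exponent,
\[e^{ax-\frac{1}{2}a^2(t+1)}\,\e[e^{aX_1}] \leq e^{ax - \frac{1}{2}a^2(t+1) + \frac{1}{2}a^2} = e^{ax-\frac{1}{2}a^2 t},\]
so
\[\e[h(t+1,x+X_1)] \leq (1-\alpha^2)\int_0^\infty e^{ax-\frac{1}{2}a^2 t}\,\dif a = h(t,x),\]
which is the desired superharmonicity. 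The iid structure together with the Markov property then lifts this one-step inequality to $\e[h(t+n, x+S_n)] \leq h(t,x)$ for all $n$, establishing that $h$ is $S$-superharmonic in the usual sense.

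I do not foresee any real obstacle: the representation of $h$ as a Laplace-type integral against $e^{-a^2 t/2}$ was chosen precisely so that the subgaussian bound $e^{a^2/2}$ cancels exactly one unit of time in the Gaussian weight, and the positivity of the integrand makes the interchange of $\e$ and $\int$ automatic. The mild point to check is finiteness of $h(t,x)$ for $t>0$, which is immediate from the closed form given in \eqref{eq:vw3}, so all manipulations are with finite quantities. Once this lemma is in hand, combining it with Lemma \ref{lem:wt} (which gives $h \geq g$) and the general characterization of $V^S$ as the least superharmonic majorant of $g$ (cited from \cite{peskir06}) immediately yields $V^S \leq h = V^W$ on the continuation region, and hence on all of $\RR^+\times\RR$.
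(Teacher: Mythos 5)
Your proof is correct and follows essentially the same route as the paper: both reduce the one-step superharmonicity of $h$ to the pointwise inequality $\e[e^{aX_1}]\leq e^{a^2/2}$ for each fixed $a$, which is exactly the $1$-subgaussian hypothesis, and then integrate over $a$. Your version is in fact slightly more careful, since you explicitly invoke Tonelli to justify the interchange of expectation and integral, a step the paper leaves implicit.
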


\begin{proof}
    We first show the claim for fixed $a\in \RR$ and
        \[f(t,x)=e^{ax - \frac{1}{2}a^2t}.\]
    We need to show that $\e [f(t+1,x+X_1)]\leq f(t,x)$, for all $t>0$ and $x\in \RR$, and calculate
    \begin{align}
        &\e[e^{a(x+X_{1}) - \frac{a^2}{2}(t+1)}] \leq e^{ax - \frac{a^2}{2}t}\nonumber \\ 
        \Leftrightarrow &  e^{ax - \frac{a^2}{2}(t+1)} \e[e^{aX_{1}}]\leq e^{ax - \frac{a^2}{2}t}\nonumber \\
        \Leftrightarrow & e^{-\frac{a^2}{2}} \e[e^{aX_{1}}] \leq 1\nonumber \\
        \Leftrightarrow & \e[e^{aX_{1}}] \leq e^{\frac{a^2}{2}}.\label{sg2}
    \end{align}
    The last inequality \eqref{sg2} is just the defining property of a 1-subgaussian random variable. \\
    By integration over $a$ and multiplication with $(1-\alpha)$ the result follows.
\end{proof}

\begin{theorem}[An upper bound for $V^{S}$]\label{th:up}
    Let $W$ be a standard Brownian motion, $S$ a random walk with 1-subgaussian increments and
    \begin{align}
        &V^{S}(t,x) = \sup_{\tau  }\e \left [ \frac{x + S_\tau}{t+\tau}\right],\label{eq:sn5} \\
        & V^W(t,x)=\sup_{\tau  }\e \left [ \frac{x+W_\tau}{t+\tau}\right ].\nonumber
    \end{align}
    Then
        \[V^{W}(t,x)\geq V^{S}(t,x), \text{ for all $t>0$, $x\in \RR$.}\]
\end{theorem}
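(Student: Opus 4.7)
The plan is to invoke the classical principle (see \cite{peskir06}) that $V^{S}$ is the smallest $S$-superharmonic function dominating the gain $g(t,x)=x/t$. It will therefore suffice to show that $V^{W}$ itself is an $S$-superharmonic majorant of $g$. The majorization $V^{W}\geq g$ is built in to the definition of a value function (take $\tau\equiv 0$), so the whole content lies in verifying the one-step inequality $V^{W}(t,x)\geq \e[V^{W}(t+1,x+X_{1})]$.

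I would split along the partition from Lemma~\ref{lem:wt}. On the $W$-continuation region $\{x\leq \alpha\sqrt{t}\}$ one has $V^{W}=h$; moreover the pointwise bound $V^{W}\leq h$ holds globally, because $V^{W}=g\leq h$ on the $W$-stopping region by Lemma~\ref{lem:wt}. Combining these two facts with Lemma~\ref{lem:mart} gives
\[
\e\bigl[V^{W}(t+1,x+X_{1})\bigr]\;\leq\;\e\bigl[h(t+1,x+X_{1})\bigr]\;\leq\; h(t,x)\;=\;V^{W}(t,x),
\]
so the continuation-region half is essentially free.

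The main obstacle is the $W$-stopping region $\{x>\alpha\sqrt{t}\}$, where $V^{W}(t,x)=x/t$ and the detour through $h$ is no longer tight (indeed $h>x/t$ strictly there). Here I would establish $\e[V^{W}(t+1,x+X_{1})]\leq x/t$ directly, by splitting the one-step expectation according to whether $x+X_{1}$ lies above or below the new free boundary $\alpha\sqrt{t+1}$, inserting the two explicit forms of $V^{W}$ from Lemma~\ref{lem:wt} on each piece, and combining them via the subgaussian moment-generating estimate $\e[e^{aX_{1}}]\leq e^{a^{2}/2}$ on the $h$-piece (as in the proof of Lemma~\ref{lem:mart}) together with the smooth-fit identities at the boundary. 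Once $V^{W}$ has been verified $S$-superharmonic on both regions, the smallest-majorant characterization concludes $V^{S}\leq V^{W}$ globally.
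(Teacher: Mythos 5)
Your overall strategy --- verify that $V^{W}$ is itself an $S$-superharmonic majorant of $g$ and then invoke the smallest-superharmonic-majorant characterization --- is sound and genuinely different from the paper's. The paper never proves superharmonicity of $V^{W}$ on the $W$-stopping region; it works only with $h$: since $h\geq g$ and $h$ is $S$-superharmonic (Lemma~\ref{lem:mart}), one gets $V^{S}\leq h$, whence $g\leq V^{S}\leq h=g$ on the curve $x=\alpha\sqrt{t}$, so $(t,\alpha\sqrt{t})\in D^{S}$; it then uses the structural fact that $\partial C^{S}$ is the graph of a function to conclude that all points above the curve lie in $D^{S}$, and hence $V^{S}\leq V^{W}$ separately on $C^{W}$ (where $V^{W}=h\geq V^{S}$) and on $D^{W}$ (where both value functions equal $g$). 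Your route avoids that structural input on $\partial C^{S}$, which is a real advantage, but it shifts all the work onto the stopping-region half, and that is exactly where your argument has a gap.

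Concretely, on $\{x>\alpha\sqrt{t}\}$ you must show $\e[V^{W}(t+1,x+X_{1})]\leq x/t$, and the ingredients you name do not assemble into this. Applying the subgaussian moment-generating bound to the $h$-piece only reproduces $\e[V^{W}(t+1,x+X_{1})]\leq \e[h(t+1,x+X_{1})]\leq h(t,x)$, and $h(t,x)>x/t$ strictly off the boundary, so that route is a dead end; splitting at $\alpha\sqrt{t+1}$ by itself does not produce the inequality either. A workable completion: by smooth fit and the convexity of $h$ in $x$ one has $\partial_{x}V^{W}(t+1,y)\leq \tfrac{1}{t+1}$ for every $y$, hence $x\mapsto \e[V^{W}(t+1,x+X_{1})]-\tfrac{x}{t}$ has derivative at most $\tfrac{1}{t+1}-\tfrac{1}{t}<0$ and is $\leq 0$ at $x=\alpha\sqrt{t}$ (being dominated there by $h(t,\alpha\sqrt{t})-\alpha\sqrt{t}/t=0$), which yields the claim for all $x\geq\alpha\sqrt{t}$. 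Without some such argument the superharmonicity of $V^{W}$ on $D^{W}$ --- the crux of your proof --- remains unproven.
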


\begin{proof}
    Let $h(t,x)=(1-\alpha^{2})\int_0^{\infty}e^{ax - \frac{a^{2}}{2}t}\dif a$ as in Lemma \ref{lem:wt}. We know that
    \begin{itemize}
        \item $h\geq g$ (Lemma \ref{lem:wt}),
        \item $h$ is $S$-superharmonic (Lemma \ref{lem:mart}),
        \item $V^{W} = h \mathbb{I}_{C^{W}} + g \mathbb{I}_{D^{W}}$.
    \end{itemize}
    $V^{S}$ is the smallest superharmonic function dominating $g$, therefore $V^{S}\leq h$.
    We know from Lemma \ref{lem:wt} that 
        \[V^{W}(t,\alpha\sqrt{t}) = h(t,\alpha\sqrt{t}) = g(t,\alpha\sqrt{t}),\]
    and therefore
        \[g(t,\alpha\sqrt{t})\geq V^{S}(t,\alpha\sqrt{t}) \leq h(t,\alpha\sqrt{t}) = g(t,\alpha\sqrt{t}).\]
    Hence $V^{S}(t,\alpha\sqrt{t}) = g(t,\alpha\sqrt{t})$ and $(t,\alpha\sqrt{t})\in D^{S}$.
    The boundary $\partial C^{S}$ is the graph of a function, therefore  $(t,x)\in D^{S}$ for all $x\geq \alpha\sqrt{t}$. It follows that $C^{S}\subset C^{W}$ and that
    $V^{S}(t,x)\leq V^{W}(t,x)$, for all $t>0$, $x\in \RR$.
 \end{proof}

\begin{corollary}
    From the proof we see that 
        \[C^{S}\subset C^{W},\]
    and 
        \[b(t)\leq \alpha\sqrt{t}, \text{ for all } t>0.\]
\end{corollary}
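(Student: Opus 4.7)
The plan is to extract both statements directly from the argument already carried out in the proof of Theorem \ref{th:up}; no new tools are needed. For the first inclusion, I would recall the sandwiching step from that proof: we have $g \leq V^{S} \leq h$ everywhere (the lower bound is trivial, the upper bound uses that $h$ is superharmonic and dominates $g$, and that $V^{S}$ is the smallest such function), while Lemma \ref{lem:wt} gives $h(t,\alpha\sqrt{t}) = g(t,\alpha\sqrt{t})$. Squeezing $V^{S}$ between two equal quantities at $(t,\alpha\sqrt{t})$ shows $V^{S}(t,\alpha\sqrt{t}) = g(t,\alpha\sqrt{t})$, i.e. $(t,\alpha\sqrt{t})\in D^{S}$ for every $t>0$.

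Next I would use the structural fact, already invoked in the proof of Theorem \ref{th:up}, that $\partial C^{S}$ is the graph of a function $b$. Equivalently, $D^{S}$ is upward-closed in the spatial variable: if $(t,x)\in D^{S}$ and $x'\geq x$, then $(t,x')\in D^{S}$. Applied to the diagonal points $(t,\alpha\sqrt{t})$ found in the previous step, this gives $(t,x)\in D^{S}$ for all $x\geq \alpha\sqrt{t}$. Rewriting this in terms of the continuation sets yields
\[
C^{S}\;\subset\;\{(t,x)\mid x<\alpha\sqrt{t}\}\;=\;C^{W},
\]
where the equality uses Lemma \ref{lem:wt}'s characterisation of $C^{W}$.

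For the boundary inequality I would simply read off: since $b(t)=\inf\{x\in\RR\mid (t,x)\in D^{S}\}$ and we have just shown $(t,\alpha\sqrt{t})\in D^{S}$, it follows that $b(t)\leq \alpha\sqrt{t}$ for every $t>0$. There is no real obstacle here; the only point that deserves a sentence of justification is the monotonicity of the stopping region in $x$ (equivalently, that $\partial C^{S}$ is a graph), which is standard for $\sn$-type problems and was already used without comment in the theorem's proof.
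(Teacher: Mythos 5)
Your argument is correct and is essentially the paper's own: the corollary is extracted from the proof of Theorem \ref{th:up}, which squeezes $V^{S}$ between $g$ and $h$ at the points $(t,\alpha\sqrt{t})$ to place them in $D^{S}$, then uses that $\partial C^{S}$ is the graph of a function to conclude $(t,x)\in D^{S}$ for all $x\geq\alpha\sqrt{t}$, hence $C^{S}\subset C^{W}$ and $b(t)\leq\alpha\sqrt{t}$. Your write-up matches this step for step (and is in fact slightly cleaner, since the paper's displayed chain of inequalities at the squeeze step contains a typographical slip in the direction of one inequality sign).
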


\subsection{The Chow-Robbins game}\label{ssec:sn}
Let $X_1, X_2 \dots$ be iid.\ random variables with 
$P(X_i =-1) =P(X_i =1) = \frac{1}{2} $ and
$S_n = \sum_{i=1}^{n}X_i$.
The classical Chow-Robbins problem is given by
\begin{equation}\label{eq:cr}
    V^S(t,x)=\sup_{\tau  }\e \left [ \frac{x + S_\tau}{t+\tau}\right ].
\end{equation}
The $X_i$ are 1-subgaussian with variance 1. An a.s.\ finite stopping time $\tau_{\ast}$ exists that solves \eqref{eq:cr}, see \cite{dvoretzky1967}. By Theorem \ref{th:up} we get that
    \[V^S(t,x)\leq V^{W}(t,x) =  h(x,t)\mathbb{I}_{\{x \leq \alpha \sqrt{t}\}} + \frac{x}{t}\mathbb{I}_{\{x > \alpha\sqrt{t}\}}. \]
We will see later on that this upper bound is very tight.
We will construct a lower bound for $V^S$ in the next section and
give rigorous computer analysis of the problem in Section \ref{sec:comp}.

\begin{example}
    For some time it was unclear whether it is optimal to stop in $(8,2)$ or not. It was first shown in \cite{medina2009} and later confirmed in \cite{haeggstoem2013} that $(8,2)\in D^{S}$.\footnote{In \cite{haeggstoem2013} this is written as $5-3$, 5 \textit{heads}$-$3 \textit{tails}.} We show how to immediately prove this with our upper bound.\\
    We choose the time horizon $T=9$, set $V_{u}^{S}(T,x) = V^{W}(T,x)$ and calculate with one-step backward induction $V_{u}^{S}(8,2)$ as an upper bound for $V^{S}(8,2)$:
    \begin{align*}
        &  V^{S}_u(9,3) = \frac{3}{9}=\frac{1}{3} & \text{since }3> \alpha\sqrt{9}\\
        &  V^{S}_u(9,1) = h(9,1) \approx 0.1642
    \end{align*}
    and we get
    \begin{align*}
        & V^{S}(8,2)\leq V_{u}^{S}(8,2) =\max\left\{\frac{2}{8}, \frac{V^{S}_u(9,3) +  V^{S}_u(9,1)}{2}\right\} \\ 
        & \frac{V^{S}_u(9,3) +  V^{S}_u(9,1)}{2} = \frac{1}{6} + \frac{0.1642}{2}=0.2488 < \frac{2}{8}.
    \end{align*}
    Hence we have $V^{S}(8,2)\leq g(8,2) = \frac{2}{8}$ and it follows that $(8,2)$ is in the stopping set.\footnote{For a detailed description of the method see Section \ref{sec:comp}.}.
\end{example}

\subsection{Generalizations}\label{ssec:gen}
In the proof of Theorem \ref{th:up} we did not use the specific form of the gain function $g(t,x) = \frac{x}{t}$. Everything we needed was that:
\begin{itemize}
    \item The value function of the stopping problem 
    \begin{equation}\label{eq:vwgen}
    V^{W}(t,x) = \sup_{\tau  }\e \left [ g(t+\tau,x + W_\tau)\right]
    \end{equation}
    is on $C$ of the form 
    \[V^{W}|_{C}(t,x) = \int_{\RR}e^{ax - \frac{1}{2}a^2t}\dif \mu(a), \]
    for a measure $\mu$.
    \item The function 
    \[h(t,x) = \int_{\RR}e^{ax - \frac{1}{2}a^2t}\dif \mu(a) \]
    dominates $g$ on $\RR^{+}\times\RR$.
    \item The boundary of the continuation set $\partial C^{S}$ of the stopping problem
    \[V^{S}(t,x) = \sup_{\tau  }\e \left [ g(t+\tau,x + S_\tau)\right]\]
    is the graph of a function $b:\RR^{+}\to \RR$.\\
    This requirement can easily be relaxed to the symmetric case, where $\partial C^{S} = \mathrm{Graph}(b) \cup \mathrm{Graph}(-b)$.
\end{itemize}
These requirements are not very restrictive, and there are many other gain functions and associated stopping problems for which this kind of upper bound can be constructed. A set of examples which fulfill these requirements and for which \eqref{eq:vwgen} is explicitly solvable can be found in \cite{peskir06}. Some of these are:
    \[g(t,x) = \frac{x^{2d-1}}{t^{q}}\]
with $d\in \NN$ and $q>d-\1$,
    \[g(t,x) = |x|-\beta\sqrt{t}\]
for some $\beta\geq 0$, and
    \[g(t,x) = \frac{|x|}{t}.\]

\section{A lower bound for $V^{S}$}\label{sec:low}
In this section we want to give a lower bound for the value function of the Chow-Robbins game \eqref{eq:cr}. Here $S$ will always be a symmetric Bernoulli random walk.
The basis of our construction is the following lemma.

    \begin{lemma}[A lower bound for $V$]\label{lem:superm}
    Let $X$ be a random walk, $g$ be a gain function,
    $V(t,x) = \sup_{\tau  }\e g(\tau+t,X_\tau+x)$ and $h:\RR^{+}\times \RR \to \RR$ measurable. For a given point $(t_0,x_0)$ let $\tau$ be a stopping time, such that 
    the stopped process 
    $(h(t\wedge \tau+ t_0,X_{t\wedge \tau}+ x_0))_{t\geq 0}$ is a submartingale and
        \[h(\tau + t_0,X_\tau + x_0)\leq g(\tau + t_0,X_\tau + x_0) \text{ a.s.}\]
    Then
        \[h(t_0,x_0)\leq V(t_0,x_0).\]
\end{lemma}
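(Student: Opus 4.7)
The plan is to chain three inequalities: the submartingale inequality for $M_s := h(s\wedge\tau, X_{s\wedge\tau})$ pushed to the limit $s\to\infty$, the pointwise domination $h(\tau,X_\tau)\leq g(\tau,X_\tau)$ assumed in the statement, and the defining supremum of $V$. I use the letter $s$ as the time index to avoid clash with the fixed point $(t,x)$; all expectations are taken under $P_{t,x}$.

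First I would apply the optional sampling theorem at the bounded (deterministic) stopping time $s$: by the submartingale hypothesis,
\[h(t,x) = \e[M_0] \leq \e[M_s] \quad \text{for every } s\geq 0.\]
Letting $s\to\infty$, the assumption $h(\tau,X_\tau)\leq g(\tau,X_\tau)$ being stated $P_{t,x}$-a.s.\ implicitly forces $\tau<\infty$ a.s., so $M_s\to h(\tau,X_\tau)$ pointwise a.s. Passing the limit under the expectation and then invoking the pointwise domination yields
\[h(t,x)\leq \e[h(\tau,X_\tau)]\leq \e[g(\tau,X_\tau)] = \e[g(t+\tau,x+X_\tau)],\]
and by the definition of $V$ as a supremum over stopping rules the right-hand side is at most $V(t,x)$.

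The only delicate step, and the natural candidate for the main obstacle, is the interchange of limit and expectation in the passage $\e[M_s]\to \e[h(\tau,X_\tau)]$. The lemma as stated contains no explicit integrability or uniform-integrability assumption, so strictly one either has to supply such a condition in each application or argue via Fatou's lemma after bounding $M_s^+$ by an integrable majorant. In the Chow-Robbins application of Section \ref{sec:low} both $h$ and $g$ will be bounded on the reachable part of the state space, so dominated convergence applies at once; beyond verifying this, the proof is essentially just the three-line chain above.
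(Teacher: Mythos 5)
Your proof is correct and follows essentially the same route as the paper, which simply invokes the optional sampling theorem to get $h(t,x)\leq \e_{t,x}[h(\tau,X_\tau)]\leq \e_{t,x}[g(\tau,X_\tau)]\leq V(t,x)$ in one line. Your additional care about passing to the limit $s\to\infty$ and the missing uniform-integrability hypothesis is a genuine refinement the paper glosses over, but it does not change the argument's structure.
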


\begin{proof}
    Since $h(\tau,X_\tau)\leq g(\tau,X_\tau)$ we can use the optional sampling theorem and obtain
        \[h(t_0,x_0) \leq \e [h(\tau+t_0,X_\tau+x_0)] \leq \e [g(\tau+t_0,X_\tau+x_0)] \leq V(t_0,x_0).\]
\end{proof}

We modify the function $h(t,x)=(1-\alpha^2)\int_{0}^{\infty}e^{ax-\frac{a^2}{2}t} \dif a$ from Lemma \ref{lem:wt} slightly to
\begin{equation}\label{eq:hc}
    h_{c}(t,x) := K  \int_{0}^{\infty}e^{ax-\frac{c}{2}a^2t} \dif a
\end{equation}
for some $0<c< 1$ and $K>0$ to mach the assumptions of Lemma \ref{lem:superm}. As a stopping time we choose 
\begin{equation}
    \tau_0 = \inf \{n\geq 0\mid x+S_n \geq \alpha \sqrt{t+n}-1\}.
\end{equation}
Unfortunately there is no $c$ such that \eqref{eq:hc} is globally $S$-subharmonic hence, we have to choose $c$ depending on the time horizon $T$. This makes the following result a bit technical.

\begin{theorem}[A lower bound for $V^{S}$]\label{th:low}
    Let  
        \[h_c(t,x):=K  \int_{0}^{\infty}e^{ax-\frac{c}{2}a^2t} \dif a 
        = K\frac{1}{ct}\frac{\Phi_{ct}(x)}{\varphi_{ct}(x)}\]
    with 
        \[K 
        = \alpha c \frac{\varphi_{c}(\alpha)}{\Phi_{c}(\alpha)}.\]
    Given a time horizon $T>0$, let $c_1$ be the biggest solution smaller than $1$  to
    \begin{equation}
        \frac{1}{2}\left(h_{c}(T+1,\alpha\sqrt{T}-1)+h_{c}(T+1,\alpha\sqrt{T}+1)\right) = h_{c}(T,\alpha\sqrt{T}),
    \end{equation}
    $c_2$ the unique positive solution of
    \begin{equation}\label{eq:c2}
        h_c(T,\alpha\sqrt{T}-1)=\frac{\alpha\sqrt{T}-1}{T},
    \end{equation}
    and $c=\min\{c_1,c_2\}$.
    Let $a_0$ be the unique positive solution (in $a$) to
    \[\frac{1}{2}\left(e^{a}+e^{-a}\right)e^{-\frac{c}{2} a^2}=1.\]
    If 
    \begin{equation}\label{eq:t2}
        T\geq\left(\frac{\alpha}{a_0c}\right)^2,
    \end{equation}
    then 
    $h_c(t,x)$ is a lower bound for $V^{S}(t,x)$ for all $t\geq T$ and $x\leq \alpha \sqrt{t}$.
\end{theorem}
 
\begin{remark}
    Our numerical evaluations suggest that for $T\geq 4$ \eqref{eq:t2} is always satisfied and for $T\geq 200$ we always have $c=c_1$.
\end{remark}

\begin{proof}
    We divide the proof into tree parts:\\
    (1.) We show that the stopped process $h_c(t\wedge\tau_0,x+S_{t\wedge\tau_0})_{t\geq T}$ is a submartingale.\\
    (2.) We calculate $K$.\\
    (3.) We show that 
        \[h(\tau_0,x+S_{\tau_0})\leq g(\tau_0,x+S_{\tau_0}) ~P_{T,x}\text{-a.s.},\]
    and use Lemma \ref{lem:superm} to prove the statement. An illustration of the setting is given in Figure \ref{hvsV}.\\

    (1.) We have to show that 
    \begin{equation}\label{eq:subm2}
        \frac{1}{2}\left(h_{c}(t+1,x-1)+h_{c}(t+1,x+1)\right) \geq h_{c}(t,x)
    \end{equation}
    for every $t\geq T$ and $x\leq \alpha \sqrt{t}-1$ and will even show \eqref{eq:subm2} for all $x\leq \alpha \sqrt{t}$.\\
    The constant $K$ has no influence on \eqref{eq:subm2}, so we set it equal 1 for now. 
    We have
    \begin{align}
        f_c(t,x)&:=\frac{1}{2}\big(h_{c}(t+1,x-1)+h_{c}(t+1,x+1)\big) - h_{c}(t,x)\nonumber\\
        &=\int_{0}^{\infty}\frac{1}{2} e^{a(x+1)-\frac{c}{2}a^2(t+1)}\dif a +  
        \int_{0}^{\infty}\frac{1}{2} e^{a(x-1)-\frac{c}{2}a^2(t+1)}\dif a - \int_{0}^{\infty}e^{ax-\frac{c}{2}a^2t} \dif a \nonumber\\
        &= \int^{\infty}_{0}e^{ax-\frac{c}{2}a^2t}\left[\frac{1}{2}(e^a+e^{-a})e^{-\frac{c}{2}a^2}-1 \right ]\dif a.\label{eq:fc1}
    \end{align}
    The function $\lambda(a):=\frac{1}{2}(e^a+e^{-a})e^{-\frac{c}{2}a^2}-1$ has a unique positive root $a_0$ and for $a\in[0,a_0]$ we have $\lambda(a)\geq 0$ and for $a\geq a_0$ $\lambda(a)\leq 0$.\\
    Suppose for given $(t,x)$, we have $f_c(t,x)\geq 0$. Let $\delta \geq 0$, $\varepsilon \in \RR$, we have
    \begin{align*}
        & f_c(t+\delta,x+\varepsilon)  = \int^{a_0}_{0}e^{ax-\frac{c}{2}a^2t} \lambda(a) e^{\varepsilon a - \delta \frac{c}{2} a^2} \dif a 
        + \int^{\infty}_{a_0}e^{ax-\frac{c}{2}a^2t} \lambda(a) e^{\varepsilon a - \delta \frac{c}{2} a^2} \dif a \\
        & \overset{(\ast \ast)}\geq \int^{a_0}_{0}e^{ax-\frac{c}{2}a^2t} \lambda(a) e^{\varepsilon a_0 - \delta \frac{c}{2} a_0^2} \dif a 
        + \int^{\infty}_{a_0}e^{ax-\frac{c}{2}a^2t} \lambda(a) e^{\varepsilon a_{0} - \delta \frac{c}{2} a_0^2} \dif a\\
        &= e^{\varepsilon a_{0} - \delta \frac{c}{2} a_0^2}  f_c(t,x) \geq 0.
    \end{align*}
    Here $(\ast\ast)$ is true if $\varepsilon a - \delta \frac{c}{2} a^2 \geq \varepsilon a_{0} - \delta \frac{c}{2} a_0^2$ for $a\leq a_{0}$ and
    $\varepsilon a - \delta \frac{c}{2} a^2 \leq \varepsilon a_{0} - \delta \frac{c}{2} a_0^2$ 
    for $a\geq a_{0}$, what is the case if
    \begin{equation}\label{eq:eps}
        \varepsilon \leq a_0 \delta \frac{c}{2}.
    \end{equation}
    By assumption, we have $f_c(T,\alpha \sqrt{T})\geq 0$. (If $c=c_1 $ as in all our computational examples, this is clear. If $c=c_2<c_1$ an inspection of $f_c$ in \eqref{eq:fc1} shows that $f_c>f_{c_1}$.)
    The function $\alpha \sqrt{t}$ is concave and 
        \[\frac{\partial}{\partial t}\alpha\sqrt{t}=\frac{\alpha}{2\sqrt{t}},\]
    so for $t\geq T$ and $x\leq \alpha\sqrt{t}$ with $(t,x)=(T+\delta,\alpha \sqrt{T}+\varepsilon)$ we have 
        \[\varepsilon\leq \delta\frac{\alpha}{2\sqrt{T}}.\]
    Putting this into \eqref{eq:eps} we get the condition
        \[\delta\frac{\alpha}{\sqrt{T}}\leq a_0 \delta c,\text{ i.e.\ }  T\geq\left(\frac{\alpha}{a_0c}\right)^2\]
    what is true by assumption. That concludes the first part of the proof.\\

    (2.) We want to choose $K$ such that $h_c(t,\alpha\sqrt{t})=g(t,\alpha\sqrt{t}) = \frac{\alpha}{\sqrt{t}}$. We first show that this is possible and then calculate $K$.
    We have
    \begin{align*}
        & h_c(t,x ) = K  \int_{0}^{\infty}e^{ax-\frac{c}{2}a^2t} \dif a = 
        K\frac{1}{ct}\frac{\Phi_{ct}(x)}{\varphi_{ct}(x)}
    \end{align*}
    and
    \begin{align*}
        & h_c(t,\alpha\sqrt{t}) = 
        K\frac{1}{ct}\frac{\Phi_{ct}(\alpha\sqrt{t})}{\varphi_{ct}(\alpha\sqrt{t})}
        = K\frac{1}{c\sqrt{t}}\frac{\Phi_{c}(\alpha)}{\varphi_{c}(\alpha)}
    \end{align*}
    what depends only on $\frac{1}{\sqrt{t}}$. Solving $h_c(t,\alpha\sqrt{t}) = \frac{\alpha}{\sqrt{t}}$ we get
        \[K =   \alpha c \frac{\varphi_{c}(\alpha)}{\Phi_{c}(\alpha)}.\]

    \begin{figure}[ht]
        \centering
        \includegraphics[width=0.7\textwidth]{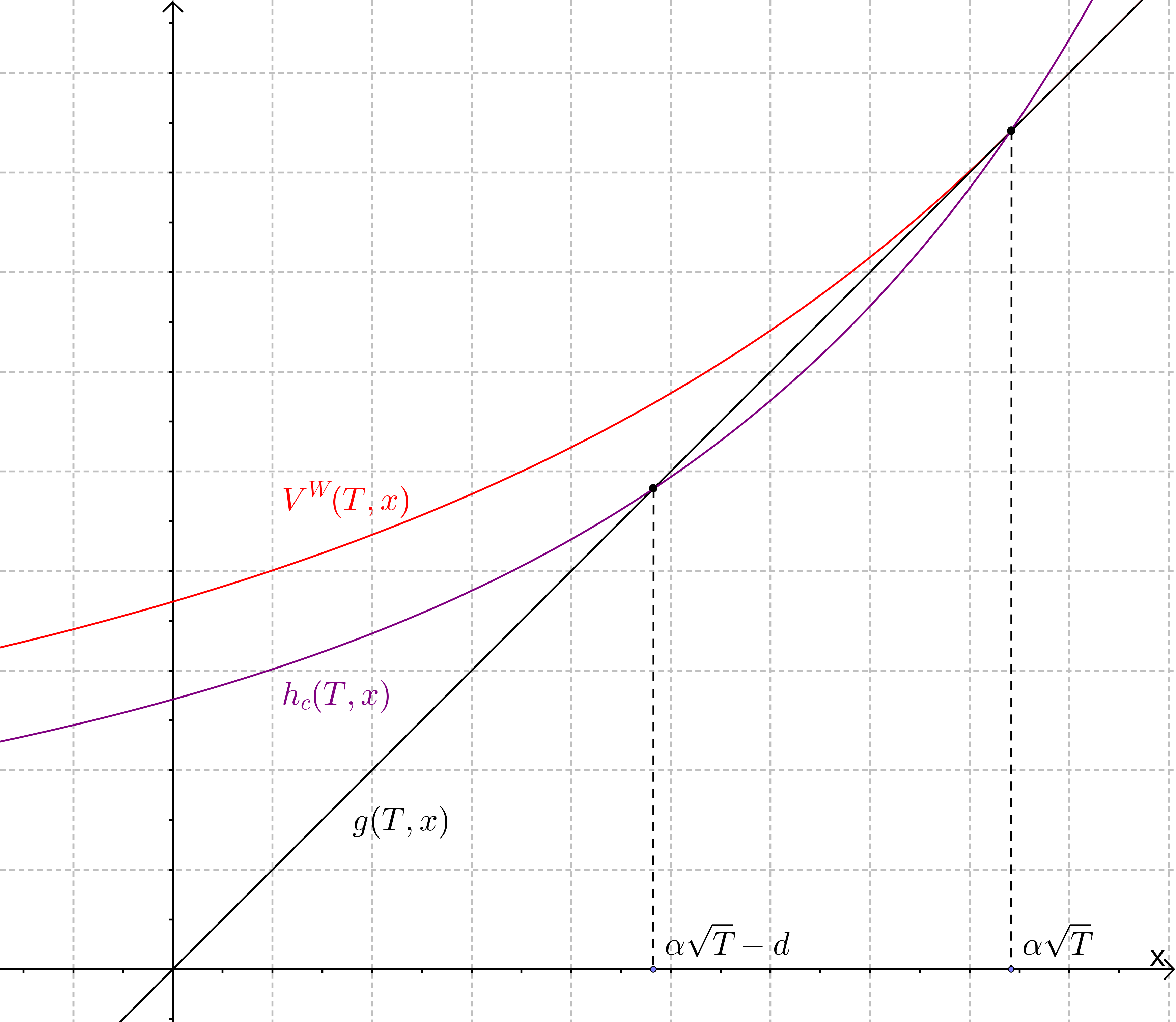}
        \caption{The upper bound $V^{W}$ and the lower bound $h_c$ for a fixed $T$. For a better illustration $c=0.6$ is chosen very small.}
        \label{hvsV}
    \end{figure}

    (3.) We chose $\tau_0 = \inf \{n\geq 0\mid x+S_n \geq \alpha \sqrt{t+n}-1\}$ and need to show that $h_c(\tau_0,S_{\tau_0})\leq g(\tau_0,S_{\tau_0})$. 
    It is clear that $S_{\tau_0}\in [\alpha\sqrt{\tau_0}-1,\alpha\sqrt{\tau_0}]$. 
    By the construction of $K$ in (2.) we know that $h_c(t,\alpha\sqrt{t})= g(t,\alpha\sqrt{t})$. 
    Since $h_c$ has strictly positive curvature, we know that $h(t, \cdot)$ has exactly one more intersection with $g(t,\cdot)$ which we denote by $\alpha \sqrt{t}-d(t)$. We will see that $d(t) \geq 1$ for $t\geq T$ and hence $h_c(t,x)\leq g(t,x)$ for $x\in [\alpha\sqrt{t}-d(t),\alpha\sqrt{t}]$.
    We have seen in (2.) that $\sqrt{t}\cdot h_c(t,\beta\sqrt{t})$ is constant for any $\beta>0$. If for some $x_0$
        \[h_c(T,x_0)\leq g(T,x_0)=\frac{x_0}{T},\]
    we set $\beta :=\frac{x_0}{\sqrt{T}}$ and see that for all $t>T$ we have
        \[h(t,\frac{x_0}{\sqrt{T}}\sqrt{t})\leq g(t,\frac{x_0}{\sqrt{T}}\sqrt{t})=\frac{1}{\sqrt{t}}\frac{x_0}{\sqrt{T}}.\]
    If $x_0\leq \alpha\sqrt{T}-1$, then $ \frac{x_0}{\sqrt{T}}\sqrt{t}\leq \alpha\sqrt{t}-1$.
    If $d(T) \geq 1 $ then we set $x_0 := \alpha \sqrt{T}-d(T)$. We can now conclude that for $t \geq T$ we have $d(t) \geq 1$, hence it is enough to show that $d(T)\geq 1$. For $c=c_2$ this is true by assumption. In general $c\leq c_2$ and we have $\frac{\partial }{\partial x}h_c(t,x) \leq \frac{\partial }{\partial x}h_{c_2}(t,x)$. Since $h_{c_2}(T,\alpha \sqrt{T})= h_{c_2}(T,\alpha \sqrt{T})$ we have that
        \[h_{c}(T,\alpha \sqrt{T}-1)\leq h_{c_2}(T,\alpha \sqrt{T}-1)=\frac{\alpha \sqrt{T}-1}{T}\]
    and the statement follows. 
    Now $h_c$ and $\tau_0$ fulfill the conditions of Lemma \ref{lem:superm}. This completes the proof.
\end{proof}
 
\begin{remark}
    The only properties of $S$ we used in the proof, are that $S$ has limited jump sizes upwards and that
        \[m_{S_1}(a)=\e [e^{a S_1}])\]
    has only one positive intersection with $e^{\frac{c}{2}a^2}$ (i.e.\ $\frac{1}{2}( e^{a} + e^{-a})e^{-\frac{c}{2}a^2}-1$ has only one positive root).
    This kind of lower bound can be constructed for any random walk with increments that fulfill these two conditions. This would of course result in different values for $c$. 
\end{remark}

Some values for $c$ are given in the table below
\begin{center}
\begin{tabular}{ll}
    \textbf{T} & \textbf{c}\\
    $10^{3}$ & 0.999204\\
    $10^{4}$ & 0.9999212\\
    $10^{5}$ & 0.99999214\\
    $10^6$   & 0.999999216.
\end{tabular}
\end{center}

\section{Error of the bounds}
We want to show that the relative error of the constructed bounds is of order $\mathcal{O}(1/T)$, for $x\geq 0$. First, we show that $c = c(T) \geq \frac{T-1}{T}$ for $T$ large enough. Indeed, for $c_1$ evaluating \eqref{eq:fc1} for $c = \frac{T}{T+1}$ yields
\begin{align}
    f_{\frac{T}{T+1}}(T,x) &= \int^{\infty}_{0}e^{ax-\frac{1}{2}a^2T}\left[\frac{1}{2}(e^a+e^{-a})-e^{\frac{1}{2}\frac{1}{T+1}} \right ]\dif a,
\end{align}
which can be seen to be positive for $T$ large enough, yielding $c_1\geq \frac{T}{T+1}\geq \frac{T-1}{T}$. We evaluate \eqref{eq:c2} for $c = \frac{T-1}{T}$ and get with elementary estimates
\begin{align*}
    & h_{\frac{T-1}{T}}(T,\alpha\sqrt{T}-1)
    = \frac{K}{\sqrt{T-1}}\frac{\Phi\left(\frac{\alpha \sqrt{T}-1}{\sqrt{T}}\right)}{\varphi\left(\frac{\alpha \sqrt{T}-1}{\sqrt{T}}\right)}\\
    & \leq \frac{K}{\sqrt{T-1}}\left(\frac{\Phi\left(\frac{\alpha \sqrt{T}}{\sqrt{T}}\right) - \frac{1}{\sqrt{T-1}}\varphi\left(\frac{\alpha \sqrt{T}}{\sqrt{T}}\right)}{\varphi\left(\frac{\alpha \sqrt{T}}{\sqrt{T}}\right)e^{\frac{\alpha\sqrt{T}-\frac{1}{2}}{T-1}}}\right)\\
    & = e^{-\frac{\alpha\sqrt{T}-\frac{1}{2}}{T-1}}\left(\frac{\alpha }{\sqrt{T}}  - \frac{K}{T-1}\right) \leq \frac{\alpha \sqrt{T}-1}{T}
\end{align*}
for $T$ large enough, so that $c_2 \geq \frac{T-1}{T}$. We obtain
$c = \min \{c_1,c_2\}\geq \frac{T-1}{T}$.
We now calculate the asymptotic relative error between $V_{u}$ and $V_{l}$ in $x = 0$:
\begin{align*}
    & \frac{V_u(T,0)}{V_l(T,0)}-1 
    = \frac{h(T,0)}{h_c(T,0)}-1 = \frac{1-\alpha^2}{K}\sqrt{c}-1
    =\frac{1-\alpha^2}{\alpha}\frac{\Phi\left(\frac{\alpha}{\sqrt{c}}\right)}{\varphi\left(\frac{\alpha}{\sqrt{c}}\right)}-1
\end{align*}
We approximate
\begin{align*}
    &\frac{\Phi\left(\frac{\alpha}{\sqrt{c}}\right)}{\varphi\left(\frac{\alpha}{\sqrt{c}}\right)}
    \approx e^{-\frac{\alpha^2(c-1)}{2c}}\left(\frac{\Phi(\alpha)}{\varphi(\alpha)} + \frac{\alpha(1-\sqrt{c})}{\sqrt{c}}\right)
    = e^{-\frac{\alpha^2(c-1)}{2c}}\left(\frac{\alpha}{1-\alpha^2} + \frac{\alpha(1-\sqrt{c})}{\sqrt{c}}\right)
\end{align*}
and get with $c = \frac{T-1}{T}$
\begin{align*}
    & \frac{h(T,0)}{h_c(T,0)}-1
    \approx e^{-\frac{\alpha^2(c-1)}{2c}}\left(1+\frac{\alpha(1-\sqrt{c})}{\sqrt{c}}\right) -1\\
    & =e^{\frac{\alpha^2}{2T-2}}\left(1+\alpha\left(\sqrt{\frac{T-1}{T}}-1\right)\right)-1
    = \mathcal{O}\left(\frac{1}{T}\right).
\end{align*}
It is now straightforward to check that for $\alpha \sqrt{T} \geq x\geq 0$
    \[\frac{h(T,0)}{h_c(T,0)}-1 \geq \frac{h(T,x)}{h_c(T,x)}-1. \]
This yields that
    \[\frac{V(T,x)}{V_l(T,x)}-1 =\mathcal{O}\left(\frac{1}{T}\right) \text{ and }  \frac{V(T,x)}{V_u(T,x)}-1 =\mathcal{O}\left(\frac{1}{T}\right),\]
for all $\alpha \sqrt{T} \geq x\geq 0$.

\section{Computational results}\label{sec:comp}
In this section we show how to compute the continuation and stopping set for the Chow-Robbins game. 
In 2013 H\"aggstr\"om and W\"astlund \cite{haeggstoem2013} computed stopping and continuation points starting from $(0,0)$. They choose a, rather large, time horizon $T=10^{7}$, and set \footnote{They use another unsymmetric notation of the problem. We give their bounds transformed into our setting (see appendix).}
    \[V^S_{l}(T,x)= \max\left\{\frac{x}{T},0\right\}\]
as a lower and
    \[V^S_{u}(T,x)= \max\left\{\frac{x}{T},0\right\} + \min\left\{\sqrt{\frac{\pi}{T}},\frac{1}{|x|}\right\}\]
as an upper bound. Then they use backward induction to calculate $V^S(n,x)$, for $n<T$ and $i\in\{u,l\}$ with
\begin{equation}\label{eq:bw}
    V_{i}^S(n,x) = \max \left \{ \frac{x}{n}, \e[V_i^{S}(n+1,x+X_i)] \right \}.
\end{equation}
If $V_u(n,x)=\frac{x}{n}$ then $(n,x)\in D$, if $V_l(n,x)>\frac{x}{n}$ then $(n,x)\in C$. In this way they were able to decide for all but 7 points $(n,x)\in\NN\times\ZZ$ with $n\leq 1000$, if they belong to $C$ or $D$.\\
We use backward induction from a finite time horizon as well, but use the much sharper bounds given in Section \ref{sec:up} and \ref{sec:low}. For our upper bound this has a nice intuition. We play the Chow-Robbins game up to the time horizon $T$, then we change the game to the favorable $\frac{W_t}{t}$-game, what slightly rises our expectation.\\

With a time horizon $T=10^{6}$ we are able to calculate all stopping and continuation points $(n,x)\in\NN\times\ZZ$ with $n\leq 489.241$. We show that all open points in \cite{haeggstoem2013} belong to $D$.\\

\paragraph{Description of the method}
Unlike H\"aggstr\"om and W\"astlund we use the symmetric notation. Let $X_i$ be iid.\ random variables with $P(X_i =-1) =P(X_i =1) = \frac{1}{2} $ and $S_n=\sum^{n}_{i=1} X_i$. 
We choose a time horizon $T$ and use $V^{W}$ given in Lemma \ref{lem:wt} as an upper bound
    \[V^S_{u}(T,x)= V^{W}(T,x)\]
and $h_c$ given in Theorem \ref{th:low} as a lower bound
    \[V^S_{l}(T,x)= h_c(T,x),\]
for $x\in \ZZ$ with $x\leq \alpha \sqrt{T}$.
For $i\in\{u,l\}$ we now calculate recursively
    \[V_i^S(n,x) = \max \left \{ \frac{x}{n}, \frac{V_i^S(n+1,x+1) +  V_i^S(n+1,x-1)}{2} \right \}.\]
If $V_l(n,x)>\frac{x}{n}$, then $(n,x)\in C$. To check if $(n,x)\in D$ we use, instead of $V_u(n,x)=\frac{x}{n}$, the slightly stronger, but numerically easier to evaluate, condition $(n,x)\in D$ if 
\[\frac{V_u^S(n+1,x+1) +  V_u^S(n+1,x-1)}{2}< \frac{x}{n}.\]
We use $T=10^{6}$ to calculate $V(0,0)$ and the integer thresholds $\hat b(n) := \lceil b(n) \rceil$. For 34 values $n\leq 10^{6}$ the exact value $\hat b(n)$ can not be determined this way, the smallest such value is $n = 489.242$. 

\begin{theorem}\label{th:calc}
    For the stopping problem \eqref{eq:cr} starting in $(0,0)$ the stopping boundary $\hat b$  is for $n\leq 489.241$ given by
    \begin{equation}
        \hat b(n)=\left \lceil \alpha\sqrt{n} - \1 + \frac{1}{7.9+4.54\sqrt[4]{n}} \right \rceil
    \end{equation}
    with the following 8 exceptions:
    \begin{center}
    \begin{tabular}{rr|rr|rr|rr}
        \textbf{n} & $\hat b(n)$ & \textbf{n} & $\hat b(n) $& \textbf{n} & $\hat b(n)$ & \textbf{n} & $\hat b(n)$\\
        $3195$ & $48$ & $14312$ & $101$ & $25257$ & $134$ & $51434$ & $191$ \\
        $12923 $&$ 96 $& $24880 $& $133 $& $44653 $& $178 $& $116342 $& $287$
    \end{tabular}
    \end{center}
    For the value function we have
        \[0.5859070128172 \leq V^{S}(0,0) \leq 0.5859070128182.\footnote{
        H\"aggstr\"om and W\"astlund use a different notation (denoted with a ' here), with $P(X'_i =0) =P(X'_i =1) = \frac{1}{2}$. Our functions and values translate to $ V(n,x) = 2V'(n,\frac{x+n}{2})-1$,
        $b'(n)=\left \lceil \frac{\alpha\sqrt{n}+n}{2} -\rho'(n)  \right \rceil$ with $\rho'(n) = \frac{1}{4} - \frac{1}{15.8+9\sqrt[4]{n}}$, and $0.7929535064086 \leq V'(0,0) \leq 0.7929535064091$. The value of $V(0,0)$ is calculated with $T=2\cdot10^{6}$.}\]
\end{theorem}

\begin{remark}
    The function $(7.9+4.54\sqrt[4]{n})^{-1} $ is constructed from our computed data. It is an interesting question whether it is indeed possible to show that $\alpha \sqrt{t} -  b(t) = \1 -\mathcal{O}(t^{-\frac{1}{4}})$.
    Lai, Yao and AitSahlia introduced a method to show that 
        \[\lim_{t\to \infty}  \alpha \sqrt{t} -  b(t) = \frac{1}{2}\]
    in \cite{lai2007}.
    This is reflected nicely in our calculations.
\end{remark}

\begin{figure}[ht]
    \centering
    \includegraphics[width=0.7\textwidth]{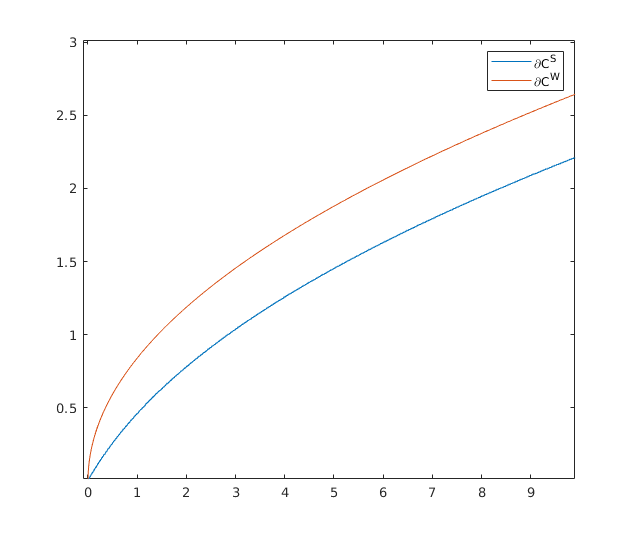}
    \caption{The boundaries of the continuation sets. While asymptotically similar, they behave differently close to 0.}
    \label{fig:1}
\end{figure}

\begin{figure}[ht]
    \centering
    \includegraphics[width=0.7\textwidth]{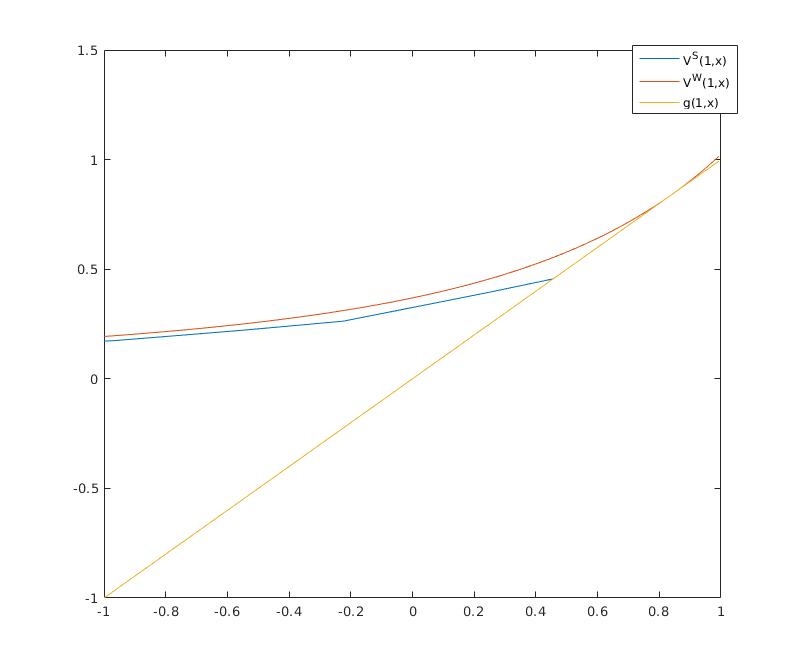}
    \caption{The value functions $V^{W}$ and $V^{S}$ in $t=1$. $V^{S}$ doesn't follow the smooth fit principle and is not everywhere smooth on $C$.}
    \label{fig:2}
\end{figure}

\begin{figure}[ht]
    \centering
    \includegraphics[width=0.7\textwidth]{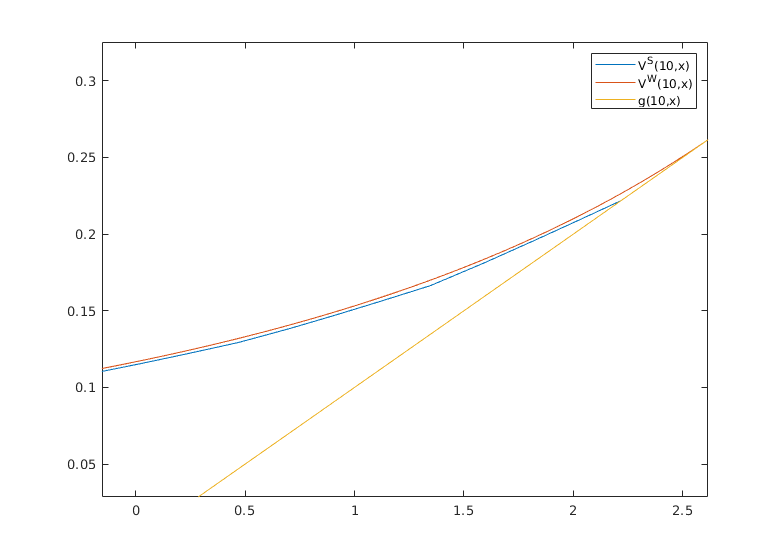}
    \caption{The value functions $V^{W}$ and $V^{S}$ in $t=10$. }
    \label{fig:3}
\end{figure}

We calculated $V^{S}$ and $b$ as well for non-integer values. We did this by choosing an integer $D$ and then calculate $V^{S}$ on 
$\frac{1}{D}\NN\times\frac{1}{D}\ZZ$ with the method described above.\footnote{For most plots we used $D=300$.} Some plots of $b$ and $V^{S}$ are given in Figures \ref{fig:1} - \ref{fig:3}. The method enables us to get very detailed impressions of $b$ and $V^{S}$ what inspires further analytical research. In $\cite{christensen2020}$ the authors showed that $V^S$ is not differentiable on a dense subset of $C^S$ and that $C^S$ is not convex.

\cleardoublepage
  
\bibliographystyle{acm}
\bibliography{On_the_Sn_problem}
  
\end{document}